\newtheorem{theorem}{Theorem}[section]
\newtheorem{lemma}[theorem]{Lemma}
\newtheorem{proposition}[theorem]{Proposition}
\theoremstyle{definition}
\newtheorem{example}[theorem]{Example}
\newtheorem{definition}[theorem]{Definition}
\newtheorem{remark}[theorem]{Remark}
\begin{document}

\title{Compactly supported Hamiltonian loops with a non-zero Calabi invariant }

\author{Asaf Kislev\footnote{Partially supported by European Research Council advanced grant 338809}}

\maketitle
\begin{abstract}
We give examples of compactly supported Hamiltonian loops with a non-zero Calabi invariant on certain open symplectic manifolds.

\end{abstract}

\section{Introduction}

Let $(M,\omega)$ be an open symplectic $2n$-dimensional manifold. Denote by $Ham(M)$ the group of Hamiltonian diffeomorphisms generated by compactly supported Hamiltonian functions.
Denote by $\widetilde{Ham}(M)$ the universal cover of $Ham(M)$. We write elements of $\widetilde{Ham}(M)$ as $[\{f_t\}_{t\in[0,1]}]$, where $\{f_t\}_{t\in[0,1]}$ is a smooth path of Hamiltonian diffeomorphisms with $f_0 = Id$, and $[\{f_t\}_{t\in[0,1]}]$ stands for the homotopy class of $\{f_t\}_{t\in[0,1]}$ with fixed end points. In what follows we use the notation $ \mathrm{Vol}(M) := \int_M \omega^n$.

Introduce the Calabi homomorphism $\mathrm{Cal}: \widetilde{Ham}(M) \to \mathbb{R}$, as \[\mathrm{Cal}([\{f_t\}_{t\in[0,1]}]) = \int_0^1 \int_M F_t \omega^n dt,\]
where $\{F_t\}_{t\in[0,1]}$ is the compactly supported Hamiltonian function whose flow is $\{f_t\}_{t\in[0,1]}$.

When $\omega$ is exact, the Calabi homomorphism vanishes on $\pi_1(Ham(M))$ and hence descends to $Ham(M)$ (see \cite[section 10.3]{1}). In this case \[\mathrm{Cal}([\{f_t\}_{t\in[0,1]}]) = \mathrm{Cal}([\{g_t\}_{t\in[0,1]}]),\] whenever $f_1 = g_1$.

Our goal is to give examples of open symplectic manifolds with a non exact symplectic form, for which $\mathrm{Cal}$ does not descend to $Ham(M)$. Equivalently, we wish to find a Hamiltonian loop generated by a compactly supported Hamiltonian function $\{H_t\}_{t \in [0,1]}$, such that \[\int_0^1\int_M H_t \omega^n dt \not=0.\]
Existence of such an example is indicated by McDuff in \cite[Remark 3.10]{2}. In addition, our interest to the problem was stimulated by \cite{6}.

An immediate corollary is that we get examples of open symplectic manifolds such that $\pi_1(Ham(M)) \not= 0$.

Let us present one geometric consequence of the non-vanishing of $\mathrm{Cal}$ on
\[\Pi:= \pi_1((Ham(M)) \subset \widetilde{Ham}(M)\;.\]
For an element $\gamma \in \Pi$ put
\[\ell(\gamma):= \inf_{\{f_t\}_{t \in [0,1]}} \int_0^1 (\max F_t - \min F_t) \; dt\;,\]
where the infimum is taken over all loops $\{f_t\}_{t \in [0,1]}$ representing $\gamma$ and $F$ stands for the compactly supported
Hamiltonian generating $\{f_t\}_{t \in [0,1]}$. Recall from \cite[Chapter 7]{5} that the set
\[\{\ell(\gamma)\;:\; \gamma \in \Pi \}\]
forms the {\it Hofer length spectrum} of ${Ham}(M)$. One readily checks that
\[\ell(\gamma) \geq |\mathrm{Cal}(\gamma)|\;\; \forall \gamma \in \Pi\;.\]
Therefore, when  $\mathrm{Cal}$ does not descend,
the Hofer length spectrum is non-trivial.

\vspace{4 mm}

In section 2 we state the main theorem which enables us to construct examples of Hamiltonian loops with a non-zero Calabi invariant. Its proof is given in section 3.

\section{Examples for Hamiltonian loops with a non-zero Calabi invariant}

\begin{definition}
Let $(X^{2n},\omega)$ be a closed symplectic manifold, and let $\lbrace f_t \rbrace _{t \in [0,1]}$ be a Hamiltonian $S^1$-action on $X$.
Let $z_0 \in X$ be a fixed point of the action. We say that $z_0$ is a \emph{Maslov-zero fixed point} if the loop \[\{d_{z_0}f_t\}_{t \in [0,1]} \subset Sp(T_{z_0}X,\omega),\] has Maslov index 0.

\end{definition}

\begin{theorem}
\label{t1}
Let $(X^{2n},\omega)$ be a closed symplectic manifold, and let $\lbrace f_t \rbrace _{t \in [0,1]}$ be a Hamiltonian $S^1$-action generated by a Hamiltonian function $F$. Let $z_0$ be a Maslov-zero fixed point of $\lbrace f_t \rbrace _{t \in [0,1]}$ which satisfies \[\int_{X} F \omega^n \neq \mathrm{Vol}(X) \cdot F(z_0).\]

Then there exists an open neighborhood $B$ that contains $z_0$ such that there exists a compactly supported Hamiltonian loop $\lbrace h_t \rbrace _{t \in [0,1]}$ in the open manifold $M =~ X \backslash \overline{B}$, which satisfies: \[\mathrm{Cal}([\{h_t\}_{t \in [0,1]}]) \neq 0,\]
and which also satisfies that it is homotopic to $\lbrace f_t \rbrace _{t \in [0,1]}$ in $Ham(X)$.

\end{theorem}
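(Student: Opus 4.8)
The plan is to take the given $S^1$-action $\{f_t\}$, which is a loop in $Ham(X)$ (since $f_0 = f_1 = \mathrm{Id}$), and modify it near the fixed point $z_0$ so that the modified loop has support away from a small ball around $z_0$, while keeping track of how the Calabi invariant changes. First I would choose Darboux coordinates centered at $z_0$ in which the linearized action $\{d_{z_0}f_t\}$ is a loop in $Sp(2n,\mathbb{R})$ of Maslov index $0$; the Maslov-zero hypothesis is exactly what lets me contract this linear loop inside $Sp(2n,\mathbb{R})$, and hence (after a further isotopy shrinking the support toward $z_0$) deform $\{f_t\}$, through Hamiltonian loops in $Ham(X)$, to a loop $\{g_t\}$ that is the identity on a small ball $B'$ around $z_0$. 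Concretely one writes $f_t = \exp$ of a Hamiltonian whose quadratic part at $z_0$ generates the linear loop, uses the contraction of that linear loop to cancel the quadratic part by a compactly-supported-near-$z_0$ correction, and absorbs the higher-order terms by a standard rescaling/cutoff argument. This step produces $\{g_t\}$ homotopic to $\{f_t\}$ in $Ham(X)$ and supported in $X \setminus \overline{B'}$.

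Next I would control the Calabi invariant along this homotopy. Since $\{f_t\}$ and $\{g_t\}$ are both loops, $[\{f_t\}]$ and $[\{g_t\}]$ lie in $\Pi = \pi_1(Ham(X))$, and $\mathrm{Cal}$ on $\widetilde{Ham}(X)$ restricted to $\Pi$ is a homomorphism; but on a \emph{closed} manifold $\mathrm{Cal}$ is only defined up to the normalization of the Hamiltonian, so I must instead work with the normalized generator. The key computation is: the Calabi invariant (with the non-normalized generator $F$) of the $S^1$-loop $\{f_t\}$ is $\int_0^1 \int_X F\,\omega^n\,dt = \int_X F\,\omega^n$. If instead I use the generator $F - F(z_0)$ — which generates the same flow — I get $\int_X F\,\omega^n - \mathrm{Vol}(X)F(z_0)$, which is nonzero by hypothesis. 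The point of making the loop identity near $z_0$ is that once we excise a ball $B$ around $z_0$ on which (the relevant normalization of) the Hamiltonian vanishes, the restriction of $\{g_t\}$ to $M = X\setminus\overline{B}$ is a genuinely compactly supported Hamiltonian loop, and its Calabi invariant in $M$ equals $\int_M (F - F(z_0))\omega^n = \int_X F\omega^n - \mathrm{Vol}(X)F(z_0) \neq 0$, up to a correction coming from the homotopy from $\{f_t\}$ to $\{g_t\}$ near $z_0$. I would argue that this correction is small — indeed can be taken arbitrarily close to the amount of mass $\int_{B}(F-F(z_0))\omega^n$ removed, which tends to $0$ as the ball shrinks — so that for a sufficiently small ball the invariant is still nonzero.

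Then I would assemble the statement: set $h_t = g_t$ on $M$; it is a compactly supported Hamiltonian loop with $\mathrm{Cal}([\{h_t\}]) \neq 0$, and by construction the homotopy from $\{f_t\}$ to $\{g_t\}$ takes place in $Ham(X)$, giving the required homotopy. I expect the main obstacle to be the second step: making the deformation from $\{f_t\}$ to $\{g_t\}$ explicit enough to \emph{track} the Calabi invariant, rather than just the homotopy class. One clean way to handle this is to perform the localization in one shot: contract the linear loop $\{d_{z_0}f_t\}$ via a path of linear loops, realize the whole family by Hamiltonians, and observe that the resulting correction Hamiltonian is supported in a fixed small ball and depends continuously on the contraction parameter, so $\mathrm{Cal}$ varies continuously and the only net change is the explicitly computable boundary contribution at the endpoint; choosing the excised ball $B$ small then keeps everything nonzero. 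A subtlety to address carefully is the interplay between the (closed-manifold) ambiguity in $\mathrm{Cal}$ and the choice $F \mapsto F - F(z_0)$, which is precisely the normalization forced on us by requiring the Hamiltonian to vanish near $z_0$ — this is where the hypothesis $\int_X F\omega^n \neq \mathrm{Vol}(X)F(z_0)$ enters, and it is the reason the theorem is stated with this particular inequality.
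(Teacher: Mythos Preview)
Your outline matches the paper's proof: normalize so that $F(z_0)=0$, pass to an equivariant Darboux chart in which the action near $z_0$ is a linear loop $\{A_t\}\subset Sp(2n)$, use the Maslov-zero hypothesis to contract it, realize the contraction by a compactly supported local loop $\{g_t\}$ agreeing with $\{A_t\}$ on a smaller ball, and take $h_t=g_t^{-1}\circ f_t$ (equivalently, your deformed loop), which is then the identity near $z_0$ and homotopic to $\{f_t\}$ in $Ham(X)$.

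The one place where your sketch is not yet a proof is exactly the step you flag as the main obstacle: controlling the Calabi of the correction. Neither ``supported in a small ball'' nor ``$\mathrm{Cal}$ varies continuously along the contraction'' suffices, because the Hamiltonian $G_t$ generating the cut-off loop in the $t$-direction is only defined implicitly and has no a priori pointwise bound. The paper isolates this as a separate lemma and proves it via the identity
\[
\frac{\partial G_{s,t}}{\partial s}=a\,\frac{\partial H_{s,t}}{\partial t}-\{G_{s,t},\,aH_{s,t}\},
\]
where $H_{s,t}(x)=\langle x,Q_{s,t}x\rangle$ is the explicit quadratic $s$-direction Hamiltonian and $a$ is the cutoff. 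Since $\{G,aH\}\,\omega_0^n$ is exact, integrating over the ball and then over $s$ gives $\bigl|\int G_{1,t}\,\omega_0^n\bigr|\le K\cdot\mathrm{Vol}(B_3)$, hence $|\mathrm{Cal}(\{g_t\})|<\epsilon$ for $B_3$ small. Your heuristic that the correction is ``close to $\int_B(F-F(z_0))\omega^n$'' names the wrong quantity (that is the mass of $F$ over the excised ball, not the Calabi of the local loop), though once the lemma is in place both are small and your conclusion goes through.
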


\begin{example}
\label{e1}

Let \[X = S^2(r_1) \times S^2(r_2) \subset \mathbb{R}^3 \times \mathbb{R}^3\] with $r_1 \neq r_2$, where $r_1,r_2$ stand for the radii. This is a closed symplectic manifold with the symplectic form $\omega := area_{S^2(r_1)} \oplus area_{S^2(r_2)}$.
Define the Hamiltonian function \[ F((\alpha_1,\alpha_2,\alpha_3),(\beta_1,\beta_2,\beta_3)) := 2\pi r_1 \alpha_3 + 2 \pi r_2 \beta_3.\]
The Hamiltonian flow of $F$ is an $S^1$-action.
The point \[s = ((0,0,r_1),(0,0,-r_2))\] is a fixed point of the flow of $F$.

 \textbf{Claim:} The fixed point $s$ is a Maslov-zero fixed point, and \[\int_{X} F \omega^n \neq \mathrm{Vol}(X) \cdot F(s).\]

 \textbf{Proof:}
 Near $s$ we can find an $S^1$-invariant neighbourhood which is $S^1$-equivariantly symplectomorphic to an open neighborhood of $0$ in $\mathbb{C}^2$ with the Hamiltonian $S^1$-action generated by:\footnote{More explicitly, if we look at $S^2 \times S^2$ with cylindrical coordinates $(\theta_1,\alpha_3,\theta_2,\beta_3)$ and on $\mathbb{C}^2$ with polar coordinates $(\sigma_1,\rho_1,\sigma_2,\rho_2)$, then we can write the symplectomorphism explicitly \[(\theta_1,\alpha_3,\theta_2,\beta_3) \mapsto (\theta_1,\sqrt{2r_1^2-2r_1\alpha_3},-\theta_2,\sqrt{2r_2\beta_3 + 2r_2^2}).\]} \[F' = \pi \cdot (-\left| z_1 \right| ^2 + \left| z_2\right| ^2),\]
where $(z_1,z_2) \in \mathbb{C}^2$. We see that the Hamiltonian flow in $\mathbb{C}^2$ is the loop of symplectic matrices $e^{-2\pi i t} \oplus e^{2\pi i t} \in Sp(4)$, which has Maslov index 0. We get that $s$ is a Maslov-zero fixed point.

We can calculate \[\int_{S^2(r_1)\times S^2(r_2)} \omega^2 \cdot F(s) = \mathrm{Vol}(S^2(r_1) \times S^2(r_2)) \cdot (2\pi r_1^2 - 2 \pi r_2^2) \neq 0,\]
while \[ \int_{S^2(r_1)\times S^2(r_2)} F \omega^2 = 0.\]

This proves the claim and we see that all the requirements of Theorem~ \ref{t1} are satisfied.

\begin{remark}
\label{r1}
We can extend Example \ref{e1} to higher dimensions.

Look at \[\left( S^2 \right)^m = S^2(r_1) \times S^2(r_1) \times \ldots \times S^2(r_1) \times S^2(r_2),\] with the Hamiltonian \[F = 2 \pi (r_1 h_1 + r_1 h_2 + \ldots + r_1 h_{m-1} + (m-1) \cdot r_2 h_m),\] where $h_i$ is the height function of the $i$-th copy of $S^2$ and $r_1 \neq r_2$. Define $s$ to be the fixed point \[s=((0,0,r_1),\ldots,(0,0,r_1),(0,0,-r_2)).\]
Now $s$ is a Maslov-zero fixed point, $F(s) \neq 0$ and $\int F \omega^m = 0$. We get again that all the requirements of Theorem \ref{t1} are satisfied.

\end{remark}

\end{example}

\vspace{4 mm}

\begin{example}
\label{e4}
Let $(X',\omega)$ be a closed symplectic manifold with an $S^1$-action which has a Maslov-zero fixed point $s$, and is generated by a non-constant Hamiltonian function $F$. Under these conditions we can always construct an open symplectic manifold which admits a compactly supported Hamiltonian loop with a non-zero Calabi invariant.

Normalize $F$ so that $F(s) = 0$. Take another fixed point $s'$ with $F(s') \neq 0$ (there are always at least two fixed points with different $F$-values because $F$ achieves maximum and minimum on $X'$). If $\int_{X'} F \omega^n \neq \mathrm{Vol}(X') \cdot F(s)$ then the requirements of Theorem \ref{t1} are satisfied right away.
Assume that $\int_{X'} F \omega^n = 0$. We first deal with the case where $F(s') > 0$. Choose a neighborhood $U$ of $s'$ so that $F \vert _U > 0$. Perform an equivariant symplectic blow-up at $s'$ (see \cite[section 6.1]{4}) with a small enough weight so that the symplectic ball $B$ that we cut out in the blow-up will be inside $U$. Call the resulting symplectic manifold $(X,\widetilde{\omega})$, and the resulting Hamiltonian function $\widetilde{F}$. The mean value of $\widetilde{F}$ will be less than zero because
\[ 0 = \int_{X'} F \omega^n = \int_{X} \widetilde{F} \widetilde{\omega}^n + \int_B F \omega^n .\]
Hence,
\[ \int_{X} \widetilde{F} \widetilde{\omega}^n = - \int_B F \omega^n .\]
Note that in a neighborhood of $s$, we have $\widetilde{F} = F$. We get that $s$ is a Maslov-zero fixed point of $\widetilde{F}$ and $\widetilde{F}(s)=0$.
Hence, the requirements of Theorem \ref{t1} are satisfied.

If $F(s') < 0$ then we can define $U$ to be a neighborhood of $s'$ so that $F \vert _U < 0$ and then continue as before.

Note that we can always perform an equivariant symplectic blow-up if the weight is small enough. From the equivariant Darboux theorem, we get that there is a small neighborhood of $s'$ which is equivariantly symplectomorphic to a neighborhood of zero in $\mathbb{C}^n$ with a linear symplectic $S^1$-action inside $U(n)$. Hence, we can choose a ball inside the neighborhood with radius $\lambda$ and get that the action naturally extends to the blow-up with weight $\lambda$.

\end{example}

\vspace{4 mm}

\begin{example}
\label{e3}

Here we give examples for open monotone symplectic manifolds which admits a Hamiltonian loop with a non-zero Calabi invariant.

Consider the $S^1$-action defined on $\mathbb{CP}^n$ by
\[f_t([x_0:x_1:\ldots:x_n]) = [x_0:e^{2\pi ia_1t}x_1:\ldots:e^{2 \pi i a_n t}x_n],\]
for some $(a_1,\ldots,a_n) \in \mathbb{Z}^n$ where $\sum_{i=1}^n a_i = 0$ and $(a_1,\ldots,a_n) \neq 0$. Denote the moment map by $F$.
The point $s = [1:0:\ldots:0]$ is a Maslov-zero fixed point of $F$. Perform a monotone $\mathbb{T}^n$-equivariant symplectic blow-up at another fixed point where the value of the moment map is different. Call the resulting manifold $X$. We get that
\[\int_{X} F \omega^n \neq \mathrm{Vol}(X) \cdot F(s).\]
Hence, all the requirements of Theorem \ref{t1} are satisfied so we can construct an open monotone symplectic manifold $M := X \backslash \overline{B}$ which admits a Hamiltonian loop with a non-zero Calabi invariant. Note that $M$ remains monotone.

In Remark \ref{r3} below we shall use the monotonicity in order to show that the non-contractible Hamiltonian loop that we have constructed in $Ham(M)$ is also non-contractible in $Ham(X)$.

\end{example}

\vspace{4 mm}

\begin{remark}
\label{r3}
In the previous examples we considered an open manifold $M$ which is a subset of a bigger closed manifold $X$.
One could ask if the non-contractible loops that we have constructed in $Ham(M)$ remain non-contractible in $Ham(X)$.
The constructed loop in $Ham(M)$ is always homotopic to the original $S^1$-action in $Ham(X)$ so we get that it remains to check wether the original $S^1$-action is non-contractible in $Ham(X)$.

The answer to this question is, in general, no, as can be seen in the following example:
Let $X = S^2(r_1) \times S^2(r_2)$ with $r_1 \neq r_2$. Consider the Hamiltonian function
\[ F((\alpha_1,\alpha_2,\alpha_3),(\beta_1,\beta_2,\beta_3)) := 4\pi r_1 \alpha_3 + 4 \pi r_2 \beta_3.\]
The Hamiltonian flow of $F$ is the 2-turn rotation around the $\alpha_3$-axis times the 2-turn rotation around the $\beta_3$-axis.
It is a known fact that
\[ \pi_1(Ham(S^2)) = \mathbb{Z}_2, \]
and that the generator is the 1-turn rotation (see \cite[section 7.2]{5}).
We get that the 2-turn rotation is a contractible loop in $Ham(S^2)$ and hence the flow of $F$ is contractible in $Ham(X)$.

The point $s = ((0,0,r_1),(0,0,-r_2))$ is a Maslov-zero fixed point and \[\int_{X} F \omega^n \neq \mathrm{Vol}(X) \cdot F(s).\]
The requirements of Theorem \ref{t1} are satisfied so we can construct the open manifold $M$ and the Hamiltonian loop $\{h_t\}_{t \in [0,1]}$, such that $\{h_t\}_{t \in [0,1]}$ is non-contractible in $Ham(M)$. The loops $\{h_t\}_{t \in [0,1]}$ and $\{f_t\}_{t \in [0,1]}$ are in the same homotopy class in $\pi_1(Ham(X))$ so we get that $\{h_t\}_{t \in [0,1]}$ is contractible in $Ham(X)$ and non-contractible in $Ham(M)$.

We will now show that the situation is different for monotone manifolds. For a closed monotone symplectic manifold $X$ the Hamiltonian loops that we are considering will always be non-contractible in $Ham(X)$.

\begin{proposition}
Let $X$ be a closed monotone symplectic manifold which admits a Hamiltonian $S^1$-action $\lbrace f_t \rbrace _{t \in [0,1]}$ with a Maslov-zero fixed point $s$ such that
\[\int_{X} F \omega^n \neq \mathrm{Vol}(X) \cdot F(s),\]
where $F$ is the moment map.
Then the loop $\lbrace f_t \rbrace _{t \in [0,1]}$ is non-contractible in $Ham(X)$.
\end{proposition}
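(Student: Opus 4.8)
The plan is to detect the loop with the \emph{mixed action--Maslov homomorphism} $\mathcal{I}\colon \pi_1(Ham(X)) \to \mathbb{R}$, which is available precisely because $X$ is monotone (Polterovich; see also McDuff--Slimowitz, Entov, and the Seidel representation). Recall its construction: let $\lambda>0$ be the monotonicity constant, so $[\omega]|_{\pi_2(X)} = \lambda\, c_1|_{\pi_2(X)}$. Given $\gamma \in \pi_1(Ham(X))$, represent it by a loop $\{\phi_t\}_{t\in S^1}$ with $\phi_0=\phi_1=Id$, normalize the generating Hamiltonian $H_t$ by $\int_X H_t\,\omega^n = 0$, choose a fixed point $x_0$ of the loop (so $t\mapsto x_0$ is a contractible $1$-periodic orbit) together with its constant capping, and set
\[
\mathcal{I}(\gamma) := \mathcal{A}_H(x_0) + \lambda\, \mu(x_0),
\]
where $\mathcal{A}_H(x_0) = -\int_0^1 H_t(x_0)\,dt$ is the symplectic action of the constant orbit and $\mu(x_0)$ is the Conley--Zehnder-type (mean) index of the linearized loop $\{d_{x_0}\phi_t\}\subset Sp(T_{x_0}X,\omega)$ relative to the constant capping. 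The two facts I would quote are: (i) this number is independent of the chosen fixed point and capping --- this is exactly where monotonicity is used, since re-capping by a sphere $A$ changes $\mathcal{A}_H$ by $-\langle[\omega],A\rangle$ and $\mu$ by $2\langle c_1,A\rangle$, and these cancel up to the constant $\lambda$; and (ii) $\gamma\mapsto\mathcal{I}(\gamma)$ is a group homomorphism, so it vanishes on the trivial element.

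Next I would evaluate $\mathcal{I}$ on $\gamma_0 := [\{f_t\}]$ using the given Maslov-zero fixed point $s$. Write $\bar F := F - \mathrm{Vol}(X)^{-1}\int_X F\,\omega^n$ for the normalized moment map; since adding a constant does not change the Hamiltonian vector field, $\bar F$ still generates $\{f_t\}$, and it is time-independent. The orbit $t\mapsto f_t(s)=s$ is constant, so its action is $\mathcal{A}_{\bar F}(s) = -\bar F(s) = \mathrm{Vol}(X)^{-1}\int_X F\,\omega^n - F(s)$. For the index term, the linearized loop $\{d_s f_t\}\subset Sp(T_sX,\omega)$ has Maslov index $0$ by hypothesis, and the mean index of a loop of symplectic matrices based at the identity is twice its Maslov index; with the constant capping this gives $\mu(s)=0$. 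Hence
\[
\mathcal{I}(\gamma_0) = \mathcal{A}_{\bar F}(s) = \frac{1}{\mathrm{Vol}(X)}\int_X F\,\omega^n - F(s),
\]
which is nonzero precisely by the assumption $\int_X F\,\omega^n \neq \mathrm{Vol}(X)\cdot F(s)$. Since $\mathcal{I}$ is a homomorphism vanishing on the trivial loop, $\gamma_0\neq 0$, i.e.\ $\{f_t\}$ is non-contractible in $Ham(X)$.

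The main obstacle is not the geometry but the bookkeeping: one must pin down the sign and normalization conventions for the action functional, the Maslov/Conley--Zehnder index and the monotonicity constant so that the re-capping contributions genuinely cancel, and one must be sure that the \emph{constant} capping of the fixed-point orbit is the one for which the index term is controlled by the Maslov-zero hypothesis. Because the conclusion only needs $\mathcal{I}(\gamma_0)\neq 0$ rather than a precise value, these conventions are harmless, but the well-definedness of $\mathcal{I}$ on monotone manifolds --- the step that makes monotonicity indispensable here, in contrast with the non-monotone situation discussed in Remark \ref{r3} --- should be invoked by citation rather than reproved. An essentially equivalent alternative would be to run the argument through the Seidel representation $\pi_1(Ham(X))\to QH^*(X)^\times$: one shows that the Seidel element of $\gamma_0$ is not the unit by reading off its lowest-degree term from the fixed point $s$, again using monotonicity to keep the quantum grading well-behaved.
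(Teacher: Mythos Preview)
Your proposal is correct and follows essentially the same route as the paper: compute the mixed action--Maslov invariant of the loop at the Maslov-zero fixed point $s$ with the constant capping, normalize $F$ to have zero mean so the action term equals $-F(s)\neq 0$, and observe that the Maslov term vanishes. The paper's proof is a terse version of exactly this argument; your additional remarks on the role of monotonicity and the recapping cancellation only make the dependence on the hypothesis more explicit.
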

\vspace{4 mm}

\begin{proof}
Consider the mixed action-Maslov invariant of the loop $\lbrace f_t \rbrace _{t \in [0,1]}$ (see \cite{8}). We calculate it via the constant loop $\lbrace f_t(s) \rbrace _{t \in [0,1]} \subset X$ with the constant spanning disk $\overline{s}:D^2 \to X$. Normalize $F$ so that its mean value will be zero, and calculate the symplectic action
\[A(\lbrace f_t \rbrace _{t \in [0,1]} ,\overline{s}) = \int_{D^2} \overline{s}^*\omega - \int_0^1F(s) dt = -F(s) \neq 0.\]
The Maslov index of the loop $\{d_{s}f_t\}_{t \in [0,1]}$ is zero so we get that the mixed action-Maslov invariant is
\[ I = A(\lbrace f_t \rbrace _{t \in [0,1]} ,\overline{s}) \neq 0. \]
Hence the loop $\lbrace f_t \rbrace _{t \in [0,1]}$ is non-contractible in $Ham(X)$.
\end{proof}
\end{remark}

\vspace{4 mm}

\section{Proof of Theorem \ref{t1}}
In order to prove Theorem \ref{t1} we will need the following lemma.

\begin{lemma}
\label{l1}
Let $\lbrace A_t \rbrace _{t \in [0,1]} \subset Sp(2n)$ be a contractible loop of symplectic matrices.
Then for any $\epsilon > 0$ and any open ball $B$ around zero in the standard symplectic $\mathbb{R}^{2n}$, there is a smaller ball $B'$ around zero and a Hamiltonian loop $\{g_t\}_{t \in [0,1]} \subset Ham(\mathbb{R}^{2n})$ such that:
\begin{enumerate}
\item
$g_t$ is supported in $B$.
\item
$g_t \vert _{B'} = A_t \vert _{B'}$.
\item
$\mid \mathrm{Cal}([\{g_t\}_{t\in[0,1]}]) \mid < \epsilon $.
\item
$\{g_t\}_{t \in [0,1]}$ is a contractible loop in $Ham(\mathbb{R}^{2n})$.
\end{enumerate}

\end{lemma}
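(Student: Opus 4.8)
The plan is to exploit the fact that a contractible loop of symplectic matrices can be generated by a compactly supported Hamiltonian isotopy whose generating function we control near the origin, and then to kill the Calabi invariant by a cutoff-and-average trick. First, since $\{A_t\}_{t\in[0,1]}$ is contractible in $Sp(2n)$, pick a homotopy $A_{s,t}$ with $A_{0,t}=A_t$ and $A_{1,t}\equiv \mathrm{Id}$, all loops based at the identity. Each loop $\{A_{s,t}\}_t$ is generated (as a linear flow on $\mathbb{R}^{2n}$) by a time-dependent quadratic Hamiltonian $Q_{s,t}(x)=\tfrac12\langle S_{s,t}x,x\rangle$ for a smooth path of symmetric matrices $S_{s,t}$; write $Q_t:=Q_{0,t}$ for the one generating $\{A_t\}_t$. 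Note $Q_{s,t}(0)=0$ for all $s,t$, which will be the source of the vanishing we need.

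Next I would localize. Fix the ball $B=B(R)$. Choose a smooth cutoff $\chi\colon\mathbb{R}^{2n}\to[0,1]$ with $\chi\equiv 1$ on $B(r_0)$ and $\mathrm{supp}\,\chi\subset B(R)$, for some $r_0<R$ to be shrunk later. Set $H_t:=\chi\cdot Q_t$ and let $\{g_t\}$ be its Hamiltonian flow. On a still smaller ball $B'$ the flow of $\chi Q_t$ agrees with the flow of $Q_t$, provided $B'$ is chosen small enough that the $A_t$-orbit of $B'$ stays inside $B(r_0)$ for all $t$; this gives (1) and (2). For (4): the whole construction carries a homotopy — apply the cutoff to the family $\chi\cdot Q_{s,t}$ to get a homotopy of compactly supported loops from $\{g_t\}$ to the constant loop, so $\{g_t\}$ is contractible in $Ham(\mathbb{R}^{2n})$ (strictly speaking one should check the homotopy has fixed endpoints at the identity, which it does since $Q_{s,0}=Q_{s,1}=0$ forces each $g_{s,t}$ to be a loop).

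The remaining point, (3), is where the quantitative content lies, and it is the main obstacle. We have
\[
\mathrm{Cal}([\{g_t\}]) \;=\; \int_0^1\!\!\int_{\mathbb{R}^{2n}} \chi(x)\,Q_t(x)\,\omega^n\,dt.
\]
A priori this is some fixed nonzero number, not small. The fix is to rescale: for $\mu>0$ let $\phi_\mu(x)=\mu x$ (a conformal symplectomorphism scaling $\omega$ by $\mu^2$), and replace $\{g_t\}$ by its conjugate $g_t^\mu:=\phi_\mu^{-1}\circ g_t^{(\mu)}\circ\phi_\mu$, where $g_t^{(\mu)}$ is built from the cutoff $\chi_\mu$ supported in $B(\mu R)$. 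Because $Q_t$ is quadratic and homogeneous of degree $2$ while the volume form scales by $\mu^{2n}$, one computes $\mathrm{Cal}([\{g_t^\mu\}]) = \mu^{2n+2}\,\mathrm{Cal}([\{g_t\}])$ (conjugation by a symplectomorphism preserves Calabi, but here $\phi_\mu$ is only conformally symplectic, so one tracks the scaling factor directly through the integral). Hence for $\mu$ small enough the Calabi invariant is less than $\epsilon$ in absolute value, while properties (1), (2), (4) are preserved after shrinking $B'$ accordingly. The one thing to verify carefully is that the rescaled loop still equals $\{A_t\}$ on some ball around zero — but $\phi_\mu$ commutes with the linear flow $A_t$, so conjugating the linear part changes nothing, and the identity $g_t^\mu|_{B''}=A_t|_{B''}$ holds on a suitable $B''$. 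This completes the plan.
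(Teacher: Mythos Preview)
There is a genuine gap: your flow $\{g_t\}$ of the cut-off Hamiltonian $\chi\cdot Q_t$ is \emph{not}, in general, a loop. You have $g_1=\mathrm{Id}$ on the inner ball $B'$ (where the flow coincides with $A_t$) and outside $\mathrm{supp}\,\chi$, but on the intermediate annulus where $0<\chi<1$ the flow is governed by a genuinely different Hamiltonian and there is no mechanism forcing $g_1=\mathrm{Id}$ there. The sentence ``$Q_{s,0}=Q_{s,1}=0$ forces each $g_{s,t}$ to be a loop'' does not hold: $Q_{s,t}$ is determined by $\partial_t A_{s,t}$, which has no reason to vanish at $t=0,1$ just because $A_{s,0}=A_{s,1}=\mathrm{Id}$; and even if one reparametrises so that the Hamiltonian vanishes at the endpoints in $t$, this says nothing about whether the time-$1$ map is the identity. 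The same defect breaks your homotopy argument for item (4), since the flows of $\chi\cdot Q_{s,t}$ in $t$ are not loops either. Your rescaling step is fine as a way to shrink Calabi (and conjugation by $\phi_\mu$ would indeed preserve the loop property \emph{if} you had it), but it cannot manufacture a loop out of a non-loop.

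The paper's proof avoids exactly this trap by cutting off in the \emph{homotopy} direction rather than the loop direction. One takes the Hamiltonian $H_{s,t}$ generating $s\mapsto A_{s,t}$ (so $H_{s,t}$ is built from $\partial_s A_{s,t}$), cuts it off to $a\cdot H_{s,t}$, and defines $g_{s,t}$ as the time-$s$ flow of $a\cdot H_{\cdot,t}$. Because $A_{s,0}=A_{s,1}=\mathrm{Id}$ for all $s$, one has $\partial_s A_{s,0}=\partial_s A_{s,1}=0$, hence $H_{s,0}=H_{s,1}\equiv 0$, hence $a\cdot H_{s,0}=a\cdot H_{s,1}\equiv 0$, hence $g_{s,0}=g_{s,1}=\mathrm{Id}$ for every $s$. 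Thus $t\mapsto g_{s,t}$ is automatically a loop for every $s$, and $s$ provides the contraction. The Calabi bound is then obtained by shrinking the support of the cutoff, which is the same idea as your rescaling (the homogeneity of the quadratic $H_{s,t}$ makes the integral small on small balls). So your scaling heuristic for (3) is morally right; the missing ingredient is to build $g_{s,t}$ via the $s$-flow so that the loop condition in $t$ comes for free.
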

\vspace{5 mm}
\begin{proof}

Let $A_{s,t}$ be a smooth homotopy of the loop $\lbrace A_t \rbrace _{t\in [0,1]}$ to the identity, such that $A_{0,t} = Id, A_{1,t} = A_t$, and $A_{s,0}=A_{s,1} = Id$.

\vspace{4 mm}

\textbf{Claim:} There is a function $H: \mathbb{R}^{2n} \times I \times I \rightarrow \mathbb{R}$ such that for a fixed $t$, $\{H(\cdot ,s,t)\}_{s\in[0,1]}$ generates $\{A_{s,t}\}_{s\in[0,1]}$ as the Hamiltonian flow with respect to the time variable $s$, and if we put $H_{s,t} := H(\cdot ,s,t)$ then:
\begin{enumerate}
\item
$H$ is smooth.
\item
$H_{s,t}(0) = 0$.
\item
$H_{s,0} = H_{s,1} = 0$.
\end{enumerate}

\textbf{Proof:} A path of symplectic matrices $\{B_t\}_{t\in[0,1]}$ is generated as Hamiltonian flow by a Hamiltonian function of the form $<x,Q_tx>$, where $Q_t = \frac{1}{2} J \frac{\partial B_t}{\partial t} B_t^{-1}$ and $J$ is the standard linear complex structure on $\mathbb{R}^{2n}$. In our situation $H_{s,t} = <x,Q_{s,t}x>$ is a Hamiltonian function that generates the flow $s \mapsto A_{s,t}$ for $Q_{s,t} = \frac{1}{2} J \frac{\partial A_{s,t}}{\partial s} A_{s,t}^{-1}$.

We get that $H$ is smooth, \[H_{s,t}(0) = <0,Q_{s,t}0>=0,\] and \[H_{s,0}=H_{s,1}=0.\] The latter assertion holds because \[A_{s,0}=A_{s,1}=Id,\] so \[\frac{\partial A_{s,0}}{\partial s} = \frac {\partial A_{s,1}} {\partial s} = 0.\]
This proves the claim.

\vspace{4 mm}

Choose three balls $B_1 \subset B_2 \subset B_3 \subset B$ such that for each $t\in [0,1]$, $A_tB_1 \subset B_2$. Choose a cut-off function $a:\mathbb{R}^{2n} \to \mathbb{R}$ such that $a \vert_{B_2} = 1$ and $a\vert_{\mathbb{R}^{2n}\backslash B_3} = 0$.

 For a fixed $t$, define $\{g_{s,t}\}_{s\in[0,1]}$ to be the Hamiltonian flow generated by the Hamiltonian function $\lbrace a\cdot H_{s,t} \rbrace _{s \in [0,1]}$ with the time variable $s$.

 For a fixed $s$, $\lbrace g_{s,t} \rbrace _{t \in [0,1]}$ is a loop with respect to $t$. That is true because $H_{s,0} = H_{s,1} = 0$ for every $s$, so $g_{s,0} = g_{s,1} = Id$.

  Define $\lbrace G_{s,t}\rbrace _{t \in [0,1]}$ to be the Hamiltonian function that generates $g_{s,t}$ for a fixed $s$ with the time variable $t$, normalized so that $G_{s,t}(0) = 0$. Denote $g_t := g_{1,t}$. Note that \[g_t \vert_{B_1} = A_t \vert _{B_1}\] for each $t$, because $a \vert _{B_2} =1$ and $A_tB_1 \subset B_2$. Note also that $g_{0,t} = Id$ because it is the time-0 map of the flow with the time variable $s$, so we get that \[G_{0,t} = 0\] for each $t \in [0,1]$.

\vspace{4 mm}

\textbf{Claim:} For each $\epsilon > 0$ there is $\delta > 0$ such that if $B_3$ is with radius less than $\delta$, then \[\left| \int_0^1 \int_{\mathbb{R}^{2n}} G_{1,t} \omega_0^n \; dt \right| < \epsilon,\] where $\omega_0$ is the standard symplectic form on $\mathbb{R}^{2n}$.

\vspace{4 mm}

\textbf{Proof:} The Hamiltonian functions $G_{s,t}$ generates $g_{s,t}$ with time $t$, and $a\cdot H_{s,t}$ generates $g_{s,t}$ with time $s$. We can use a known formula (see  \cite[section 6.1]{5}), and get that \[\frac {\partial G_{s,t}}{\partial s} = a \cdot \frac {\partial H_{s,t}} {\partial t} - \lbrace G_{s,t},a \cdot H_{s,t} \rbrace. \]
Note that $G_{0,t} = 0$, so if we integrate over $s$ we will get \[ G_{1,t} = \int_0^1 a \cdot \frac {\partial H_{s,t}} {\partial t} - \lbrace G_{s,t},a \cdot H_{s,t} \rbrace ds. \]
Note that $G_{1,t} = 0$ outside $B_3$.
Multiply with $\omega_0^n$ and integrate over $\mathbb{R}^{2n}$ to get
\[\int_{\mathbb{R}^{2n}} G_{1,t} \omega_0^n = \int_{B_3} a \cdot \int_0^1 \frac {\partial H_{s,t}} {\partial t} ds \; \omega_0^n - \int_0^1 ds \int_{B_3} \lbrace G_{s,t},a \cdot H_{s,t} \rbrace \omega_0^n. \]
For any two smooth functions $F_1,F_2$, the form $\lbrace F_1,F_2 \rbrace \omega_0^n$ is exact, so from this we get that \[\int_0^1 ds \int_{B_3} \lbrace G_{s,t},a \cdot H_{s,t} \rbrace \omega_0^n = 0.\]

Now we get that \[\left| \int_{\mathbb{R}^{2n}} G_{1,t}\omega_0^n \right| = \left| \int_{B_3} a \cdot \int_0^1 \frac {\partial H_{s,t}} {\partial t} ds \; \omega_0^n \right| \leq \int_{B_3} \int_0^1 \left| \frac {\partial H_{s,t}} {\partial t} \right| ds \; \omega_0^n.\]
 We know that $\frac {\partial H_{s,t}} {\partial t}$ is a smooth function so it is bounded on $B \times I \times I$. Hence for every $t,s \in [0,1]$ and for every $\delta$ such that $B_3 \subset B$, we have that $\left| \frac {\partial H_{s,t}} {\partial t} \right| < K$ for some $K > 0$. We get that \[\int_{B_3} \int_0^1 \left| \frac {\partial H_{s,t}} {\partial t} \right| ds \; \omega_0^n  \leq K \cdot \int_{B_3} \omega_0^n.\]
This means that we can choose $\delta$ such that \[\int_{B_3} \int_0^1 \left| \frac {\partial H_{s,t}} {\partial t} \right| ds \; \omega_0^n < \epsilon.\]
If we integrate over $t$ we will get that \[\left| \int_0^1 \int_{\mathbb{R}^{2n}} G_{1,t} dt \right| \leq \int_0^1 \int_{B_3} \int_0^1 \left| \frac {\partial H_{s,t}} {\partial t} \right| ds \; \omega_0^n \; dt < \epsilon.\]
This proves the claim.

\vspace{4 mm}

We get that $\{g_t\}_{t\in[0,1]}$ is supported in $B_3 \subset B$ , $g_t \vert _{B_1} = A_t \vert _{B_1}$ and for any $\epsilon > 0$ we can choose the radius of $B_3$ so that $\left| \mathrm{Cal}([\{g_t\}_{t\in[0,1]}]) \right| < \epsilon$, so we can define $B' := B_3$. Note also that the loop $\{g_t\}_{t\in[0,1]}$ is homotopic to the identity with the homotopy $\{g_{s,t}\}_{s \in [0,1]}$.

\vspace{4 mm}

\noindent This proves the lemma.
\end{proof}

\vspace{4 mm}

\begin{proof}[Proof of Theorem \ref{t1}]
Assume that $B_2$ is an equivariant Darboux ball around $z_0$ (see \cite{3}, section 3.1). We choose the Darboux coordinates on $B_2$ such that $z_0$ is identified with $0$ and the path $f_t \vert _{B_2}$ is identified with the path $A_t \in Symp(2n)$. From the fact that $z_0$ is a Maslov-zero fixed point, we know that the loop $\{A_t\}_{t\in[0,1]}$ has Maslov index 0, and hence it is a contractible loop in $Sp(2n)$.
Normalize $F$ so that $F(z_0) = 0$ and $\int_X F \omega^n \neq 0$.
Set \[\epsilon = \frac {\left| \int_X F \omega^n \right|} {2}.\]

Use Lemma \ref{l1} to define the contractible loop $\lbrace g_t \rbrace _{t \in [0,1]}$ and the ball $B_1$ such that for each $t \in [0,1]$, $g_t$ is supported in $B_2$, $g_t \vert _{B_1} = A_t \vert _{B_1}$, and \[\mid \mathrm{Cal}([\{g_t\}_{t\in[0,1]}]) \mid < \epsilon .\]
Define \[h_t := g_t^{-1} \circ f_t.\]
From the fact that $\lbrace g_t \rbrace _{t \in [0,1]}$ is a contractible loop we know that $\lbrace h_t \rbrace _{t \in [0,1]}$ is homotopic to $\lbrace f_t \rbrace _{t \in [0,1]}$ in $Ham(X)$.
Note also that \[h_t \vert _{B_1} = Id. \]
Define $H_t$ as the Hamiltonian function generating $h_t$: \[H_t := (F-G_t) \circ g_t. \]
Note that $G_t \vert _{B_1} = F \vert _{B_1}$.
Hence we get that $H_t \vert _{B_1} = 0$. Choose a ball $\overline{B} \subset B_1$. The flow $h_t$ is defined on $X \backslash \overline{B}$ and $H_t \vert _{B_1 \backslash \overline{B}} = 0$ so we get that $H_t$ is compactly supported on $X \backslash \overline{B}$.
From the definition of the loop $\lbrace g_t \rbrace _{t \in [0,1]}$ we have that
\[\left|\int_0^1 \int_X H_t \omega^n \; dt - \int_X F \omega^n \right| = \left| \int_0^1\int_X G_{t}\omega^n \; dt \right| = |\mathrm{Cal}([\{g_t\}_{t\in[0,1]}])| < \epsilon = \frac {\left| \int_X F \omega^n \right|} {2}. \]
From this and from the fact that $\int_X F \omega^n \neq 0$ we get that $\int_0^1\int_X H_t \omega^n \; dt \neq 0$. However, \[\int_0^1 \int_X H_t \omega^n \; dt = \int_0^1\int_{X\backslash \overline{B}} H_t \omega^n \; dt = \mathrm{Cal}([\{h_t\}_{t\in[0,1]}]).\] This completes the proof.

\end{proof}

\vspace{4 mm}

\textbf{Acknowledgement:}  This article was written under the guidance of Professor Leonid Polterovich. I also wish to thank Strom Borman, Sobhan Seyfaddini and Daniel Rosen for helpful discussions and suggestions. I am grateful to Yael Karshon for pointing out an inaccuracy in the first draft of the article. I wish to thank the anonymous referee for his/her thorough review and very useful comments and specifically for his/her suggestion to use the mixed action-Maslov homomorphism in order to show that the loops we are considering are non-contractible on monotone manifolds.

Asaf Kislev

School of Mathematical Sciences

Tel Aviv University

Tel Aviv 6997801, Israel

asafkisl@post.tau.ac.il

\end{document}